\numberwithin{equation}{section}
\numberwithin{figure}{section}
\newtheorem{theorem}{Theorem}[section]
\newtheorem{lemma}[theorem]{Lemma}
\newtheorem{proposition}[theorem]{Proposition}
\newtheorem{remark}[theorem]{Remark}
\begin{document}

\title{\textbf{Graphical translating solitons \\for the inverse mean curvature flow \\and isoparametric functions}}
\author{Tomoki Fujii}
\date{Department of Mathematics, Tokyo University of Science}
\maketitle

\begin{abstract}
In this paper, we consider a translating soliton for the inverse mean curvature flow given as a graph of a function on a domain in a unit sphere whose level sets give isoparametric foliation.
First, we show that such function is given as a composition of an isoparametric function on the unit sphere and a function which is given as a solution of  a certain ordinary differential equation.
Further, we analyze the shape of the graphs of the solutions of the ordinary differential equation.
This analysis leads to the classification of the shape of such translating solitons for the inverse mean curvature flow.
\end{abstract}

\section{Introduction}

The author\cite{F} classified the shape of the translating soliton for the mean curvature flow given as a graph of a function on a domain in the unit sphere which is a composition of an isoparametric function and some function.
In this paper, we consider the case of the inverse mean curvature flow by the similar way.

Let $N$ be an $n$-dimensional Riemannian manifold.
Define the immersion $f$ of a domain $M\subset N$ into the product Riemannian manifold $N\times\mathbb{R}$ by $f(x)=(x,u(x)),~x\in M$ with a smooth function $u:M\to\mathbb{R}$ on $M$.
Also, denote the graph of $u$ (i.e, $f(M)$) by $\Gamma$.
For a $C^{\infty}$-family of $C^{\infty}$-immersions $\{f_t\}_{t\in I}$ of $M$ into $N\times\mathbb{R}$ ($I$ is an open interval including $0$) with $f_0=f$, as $M_t=f_t(M)$, $\{M_t\}_{t\in I}$ is called the inverse mean curvature flow starting from $\Gamma$ if $f_t$ satisfies 
\begin{equation}\label{eq:mcf}
\displaystyle\left(\frac{\partial f_t}{\partial t}\right)^{\bot_{f_t}}=-\frac{1}{\|H_t\|^2}H_t,
\end{equation}
where $H_t$ is the mean curvature vector field of $f_t$ and $(\bullet)^{\bot_{f_t}}$ is the normal component of $(\bullet)$ with respect to $f_t$.

Further, according to the definition of a soliton of the mean curvature flow by Hungerb\"{u}hler and Smoczyk\cite{HS}, we define a soliton of the inverse mean curvature flow.
Let $X$ be a Killing vector field on $N\times\mathbb{R}$ and $\{\phi_t\}_{t\in\mathbb{R}}$ be the one-parameter transformation associated to $X$ on $N\times\mathbb{R}$, that is, $\phi_t$'s are isometries and $\phi_t$ satisfies
\begin{equation*}
\displaystyle\frac{\partial \phi_t}{\partial t}=X\circ\phi_t,\quad
\phi_0=id_{N\times\mathbb{R}},
\end{equation*}
where $id_{N\times\mathbb{R}}$ is the identity map on $N\times\mathbb{R}$.
Then, the inverse mean curvature flow $\{M_t\}_{t\in I}$ is called a \textit{soliton for the inverse mean curvature flow with respect to $X$} if  $\widetilde{f}_t=\phi_t^{-1}\circ f_t$ satisfies
\begin{equation}
\left(\frac{\partial \widetilde{f}_t}{\partial t}\right)^{\bot_{\widetilde{f}_t}}=0.\label{eq:mcf-soliton}
\end{equation}
In the sequel, we call such soliton an \textit{$X$-soliton} simply.
In particular, when $X=(0,1)\in T(N\times\mathbb{R})=TN\oplus T\mathbb{R}$, we call the $X$-soliton a \textit{translating soliton}.

Compared with the mean curvature flow, the translating soliton for the inverse mean curvature flow is less studied.
For a translating soliton for the mean curvature flow, the existence of  the complete rotationally symmetric graphical translating soliton which is called bowl soliton is showed by Clutterbuck, Schn\"{u}rer and Schulze\cite{CSS} (Altschuler and Wu\cite{AW} had already showed the existence in the case $n=2$).
Also, they showed a certain type of stability for the bowl soliton and investigated the asymptotic expansion as the distance function $r$ in $\mathbb{R}^n$ approaches infinity because the bowl soliton is the graph of a function which is a composition of $r$ and the solution of a certain ordinary differential equation.
Further, Wang\cite{W} showed that the bowl soliton is the only convex translating soliton which is an entire graph.
Also, Spruck and Xiao\cite{SX} showed that the bowl soliton is the only complete translating soliton which is an entire graph.
Inspired by Clutterbuck, Schn\"{u}rer and Schulze\cite{CSS}, the author\cite{F} analyze the shape of the translating soliton for the mean curvature flow given as a graph of a function which is a composition of an isoparametric function on an $n$-dimensional unit sphere $\mathbb{S}^n$ and some function which is a solution of  a certain ordinary differential equation.
For a translating soliton for the inverse mean curvature flow, Drugan, Lee, and Wheeler\cite{DLW} gave a translating soliton in $\mathbb{R}^2$ which is the cycloid generated by a circle with radius $\frac{1}{4}$ and gave a tilted cycloid product as a translating soliton in $\mathbb{R}^3$.
Kim and Pyo\cite{KP1,KP2} showed the existence and classification of rotationally symmetric translating solitons in $\mathbb{R}^{n+1}$ and showed that there is no complete translating soliton for inverse mean curvature flow in $\mathbb{R}^{n+1}$.

In the main theorem of this paper, we consider the case where $N$ is the $n$-dimensional unit sphere $\mathbb{S}^n$ and $u$ is a composition of an isoparametric function $r$ on $\mathbb{S}^n$ and some function $V$.
Then, the level sets of the isoparametric function $r$ give compact isoparametric hypersurfaces of $\mathbb{S}^n$.
The isoparametric hypersurfaces of $\mathbb{S}^n$ has been studied by several authors.
M\"{u}nzner\cite{M} showed that the number $k$ of distinct principal curvatures of compact isoparametric hypersurfaces of  $\mathbb{S}^n$ is $1$, $2$, $3$, $4$ or $6$.
By Cartan\cite{C}, the isoparametric hypersurfaces in cases $k=1$, $2$, $3$ are classified.
Also, these hypersurfaces are homogeneous.
By the result of Dorfmeister and Neher\cite{DN} and Miyaoka\cite{Mi}, the isoparametric hypersurfaces in case $k=6$ are homogeneous.
Further, in case $k=4$, Ozeki and Takeuchi\cite{OT1,OT2} constructed non-homogeneous isoparametric hypersurfaces as the regular level sets of the restrictions of the Cartan-M\"{u}nzner polynomial functions to the sphere.

In this paper, we obtain the following theorem for the shape of the graph of $V$.

\begin{theorem}\label{thm:shape-graph}
Let $r$ be an isoparametric function on $\mathbb{S}^n$~$(n\ge2)$ and $V$ be a $C^{\infty}$-function on an interval $J\subset r(\mathbb{S}^n)$.
If the inverse mean curvature flow starting from the graph of the function $u=(V\circ r)\vert_{r^{-1}(J)}$ is a translating soliton, the shape of the the graph of $V$ is like one of  those defined by {\rm Figures \ref{Vex1}}$-${\rm \ref{Vex5}}.
\end{theorem}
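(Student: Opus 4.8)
The plan is to convert the geometric soliton condition into a single scalar equation for $u$, to reduce that equation to an ordinary differential equation for $V$ by exploiting the isoparametric structure of $r$, and then to read off the admissible shapes of $V$ from a qualitative study of that ODE.

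First I would identify the equation satisfied by a graphical translating soliton. Since $X=(0,1)$ generates the vertical translations $\phi_t(x,s)=(x,s+t)$, which are isometries of $\mathbb{S}^n\times\mathbb{R}$, the soliton condition \eqref{eq:mcf-soliton} together with \eqref{eq:mcf} forces the normal component $X^{\perp}=-\|H\|^{-2}H$ along $\Gamma$, where $H$ is the mean curvature vector. Writing the upward unit normal of the graph as $\nu=W^{-1}(-\nabla u,1)$ with $W=\sqrt{1+|\nabla u|^2}$, one computes $\langle X,\nu\rangle=W^{-1}$, while $H=\mathcal{H}\,\nu$ with scalar mean curvature $\mathcal{H}=\mathrm{div}(\nabla u/W)$ up to the usual sign. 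Substituting these into the soliton identity collapses the vector equation into the single quasilinear PDE
\[
\mathrm{div}\!\left(\frac{\nabla u}{W}\right)=\varepsilon\,W,\qquad \varepsilon=\pm1,
\]
the sign $\varepsilon$ recording the orientation of $\nu$ relative to $H$.

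Next I would reduce this PDE to an ODE. Because $r$ is isoparametric there exist smooth functions $a,b$ with $|\nabla r|^2=a(r)$ and $\Delta r=b(r)$. Writing $u=V\circ r$ gives $\nabla u=V'(r)\nabla r$, so that $W=\sqrt{1+a(r)V'(r)^2}$ and $\nabla u/W=\psi(r)\nabla r$ depend on $r$ alone, where $\psi=V'(1+a(V')^2)^{-1/2}$. The identity $\mathrm{div}(\psi(r)\nabla r)=\psi'(r)\,a(r)+\psi(r)\,b(r)$ then turns the PDE into the genuine second-order ODE
\[
a\,V''=\varepsilon\,(1+a(V')^2)^2-b\,V'(1+a(V')^2)+\tfrac12\,a\,a'\,(V')^3,
\]
which is exactly the ordinary differential equation announced in the statement; here $a,a',b$ are evaluated at $r$.

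The final and hardest step is the qualitative analysis of this ODE. I would pass to the first-order system for $p=V'$ and recover $V$ by a further quadrature. On the open part of $J$ where $a(r)>0$ the equation is regular, so standard existence and uniqueness apply, and the sign of the right-hand side controls the monotonicity and convexity of the profile. The delicate points are the endpoints of $J$ at which $a(r)\to0$, that is, the focal submanifolds of the isoparametric foliation, where the ODE degenerates. There I would extract the leading-order balance to decide whether a solution extends with a horizontal tangent, meets the focal leaf with a vertical tangent or a cusp, or runs off to $\pm\infty$. Cataloguing the admissible combinations of these boundary behaviors, constrained by the sign $\varepsilon$ and by the sign of $b$, i.e. the mean curvature of the regular leaves, should yield precisely the finite list of profiles depicted in Figures \ref{Vex1}--\ref{Vex5}. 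I expect the main obstacle to be controlling the solutions through the singular endpoints and verifying that no solution exhibits behavior outside these five cases.
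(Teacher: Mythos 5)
Your first two steps are correct and coincide with the paper's own route: your scalar equation $\mathrm{div}\!\left(\nabla u/W\right)=\varepsilon W$ is, for $\varepsilon=-1$, exactly the paper's equation (\ref{eq:graph-soliton}) (expand $\mathrm{div}(\nabla u/W)=W^{-1}\Delta u-\tfrac{1}{2}W^{-3}\nabla u(\|\nabla u\|^2)$ and multiply through by $W$), and your second-order ODE is, after multiplying by $2$, identical to (\ref{eq:isopara-soliton}) — your $a,b$ being the paper's $\alpha,\beta$, not to be confused with the constants $a,b$ the paper defines later. One correction, though: $\varepsilon$ is not a free orientation parameter. Since $(X\circ f)^{\bot_f}=\langle X,\nu\rangle\nu$ with $\langle X,\nu\rangle=W^{-1}>0$ for the upward normal, while $-\|H\|^{-2}H$ is antiparallel to $H$, the soliton identity forces the sign, and only $\varepsilon=-1$ occurs. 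This matters for the statement you are proving: the $\varepsilon=+1$ equation is carried to the $\varepsilon=-1$ equation by $V\mapsto-V$, and the reflected profiles (for instance the reflection of Type I, for which $V'\to-\infty$ at the left end of its domain and $V'\to+\infty$ at the right end) are \emph{not} among Figures \ref{Vex1}--\ref{Vex5}, so leaving $\varepsilon$ undetermined is incompatible with the classification as stated.

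The genuine gap is in your third step, where you locate all the delicacy at the endpoints where $\alpha(r)\to0$ (the focal sets $r=\pm1$) and propose a leading-order balance there. The classification in fact hinges on blow-up of $V'$ at \emph{interior} radii. After specializing to $\mathbb{S}^n$ via M\"unzner's identities $\alpha=k^2(1-r^2)$, $\beta=\frac{m_2-m_1}{2}k^2-k(n+k-1)r$, the paper substitutes $\psi(r)=k\sqrt{1-r^2}\,V'(r)$, obtaining the first-order equation (\ref{eq:isopara-soliton-sphere3}) whose right-hand side factors as $-\frac{1}{k(1-r^2)}(\psi^2+1)A(\psi,r)$ with explicit null-clines $\eta_1,\eta_2$ (Lemma \ref{thm:sign}); comparison estimates of the form $\arctan\psi(r)<h_1(r)$, with $h_1$ explicit and reaching $-\frac{\pi}{2}$ at some $\overline{r}_1<1$, then show that $\psi\to-\infty$ already at a finite radius strictly inside $(-1,1)$ (Lemma \ref{thm:psi-shape1}, and similarly Lemmas \ref{thm:psi-shape2}--\ref{thm:psi-shape4}). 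This is why the vertical asymptotes in Figures \ref{Vex1}--\ref{Vex3} sit strictly inside $(-1,1)$: an endpoint analysis cannot detect them, and ``the sign of $b$'' does not control them, since the relevant sign structure is that of $A(\psi,r)$ in the $(r,\psi)$-plane, partitioned by $\eta_1,\eta_2$ over $(-1,a]\cup[b,1)$. Moreover, the borderline profiles in which $\psi\to0$ at a focal endpoint (Types IV and V) are non-generic, and their existence does not follow from cataloguing admissible boundary behaviors; the paper needs a separate connectedness argument on $(-1,1)\times\mathbb{R}$ (the unnumbered lemma following Proposition \ref{thm:graph-psi}) to show such solutions exist at all. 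Your plan would need these three ingredients — the $\psi$-substitution with factorized right-hand side, the interior blow-up comparisons, and the existence argument for the borderline types — before it closes.
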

\vspace{-0.15cm}
\begin{figure}[H]
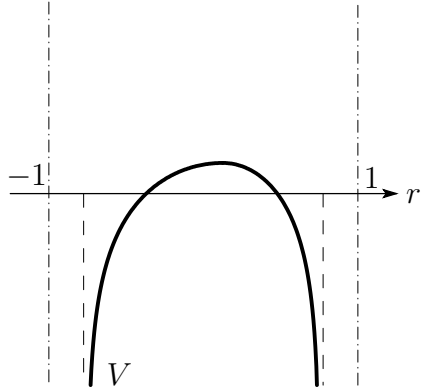

\centering
\scalebox{1.0}{\unitlength 0.1in%
% [inline block 0: 5 envs, 51921 chars -> data_tex | \begin{picture}(20.7000,20.0000)(1.8000,-22.0000)% % VECTOR 2 0 3 0 Black White  ...]
}
\caption{The graph of $V$ (Type V)}
\label{Vex5}
\end{minipage}
\end{figure}

\begin{remark}
For the $C^{\infty}$-function $V$ in Theorem \ref{thm:shape-graph}, define a $C^{\infty}$-function $\psi$ by $\psi(r)=k\sqrt{1-r^2}V'(r)$ and define $\psi_{min}$, $\psi_{max}$ by $\psi_{min}:=min_{r\in Dom(\psi)}\psi(r)$, $\psi_{max}:=max_{r\in Dom(\psi)}\psi(r)$.
Also, define a constant $R$ and functions $\eta_1$, $\eta_2$ on $(-1,a]\cup[b,1)$ by
\begin{align*}
&R:=
\begin{cases}
0\hspace{2.3cm}(k=1,3,6)\\
\displaystyle -1+\frac{km}{n-1}\quad(k=2,4),
\end{cases}\\
&\eta_1(r):=\frac{(n-1)(r-R)-\sqrt{((n-1)^2+4)r^2-2R(n-1)^2r+R^2(n-1)^2-4}}{2\sqrt{1-r^2}}\\
&\eta_2(r):=\frac{(n-1)(r-R)+\sqrt{((n-1)^2+4)r^2-2R(n-1)^2r+R^2(n-1)^2-4}}{2\sqrt{1-r^2}}.
\end{align*}
Here,  $m$ is the multiplicity of the smallest principal curvature of the isoparametric hypersurface defined by the level set of the isoparametric function $r$ in Theorem \ref{thm:shape-graph} and $a$, $b$ are defined by
\begin{align*}
a:=\frac{(n-1)^2R-2\sqrt{(n-1)^2(1-R^2)+4}}{(n-1)^2+4}\\
b:=\frac{(n-1)^2R+2\sqrt{(n-1)^2(1-R^2)+4}}{(n-1)^2+4}.
\end{align*}
If the graph of $V$ is like one defined by Figure \ref{Vex2}, then there exists $r_0\in(b,1)$ with $\psi_{min}=\eta_1(r_0)$ or $\psi_{min}=\eta_2(r_0)$.
If the graph of $V$ is like one defined by Figure \ref{Vex3}, then there exists $r_0\in(-1,a)$ with $\psi_{max}=\eta_1(r_0)$ or $\psi_{max}=\eta_2(r_0)$.
Also, we obtain the following table about the behavior of the graph of $\psi$ for each type of the graph of $V$.
Here, we define $x$, $y\in(-1,1)$ by $Dom(\psi)=(x,y)$.

\begin{table}[H]
\caption{The behavior of the graph of $\psi$}
\label{table:psi-behavior}
\begin{center}
\begin{tabular}{|c|c|c|c|c|}\hline
the graph of $V$ & $Im(\psi)$ & $\psi'$ & $r\downarrow x$ & $r\uparrow y$\\ \hline
Type I & $(-\infty,\infty)$ & $<0$ & $\infty$ & $-\infty$\\
Type II & $[\eta_i(r_0),\infty)$ & $-$ & $\infty$ & $\infty$\\
Type III & $(-\infty,\eta_i(r_0)]$ & $-$ & $-\infty$ & $-\infty$\\
Type IV & $[0,\infty)$ & $<0$ & $\infty$ & $0$\\
Type V & $(-\infty,0]$ & $<0$ & $0$ & $-\infty$\\ \hline
\end{tabular}
\end{center}
\end{table}
\end{remark}

\section{Proof of Theorem \ref{thm:shape-graph}}

Let $(N,g)$ be an $n$-dimensional Riemannian manifold and $u:M\to\mathbb{R}$ be a function on a domain $M\subset N$.
Denote the graph of $u$ by $\Gamma$.
Also, denote the gradient and Laplacian with respect to $g$ by $\nabla$ and $\Delta$ respectively.
Then, we have the following lemma about the soliton of the inverse mean curvature flow.
\begin{lemma}\label{lemma:graph-soliton-condition}
If the inverse mean curvature flow starting from $\Gamma$ is a translating soliton, $u$ satisfies
\begin{equation}
\Delta u+\|\nabla u\|^2+1-\frac{\nabla u(\|\nabla u\|^2)}{2(1+\|\nabla u\|^2)}=0.\label{eq:graph-soliton}
\end{equation}
Conversely, if $u$ satisfies {\rm (\ref{eq:graph-soliton})}, the family of the images $\{M_t\}_{t\in\mathbb{R}}$ definded by $f_t(x)=(x,u(x)+t),~x\in M$ and $M_t=f_t(M)$ is a translating soliton for the inverse mean curvature flow.
\end{lemma}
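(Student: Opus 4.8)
The plan is to convert the dynamical soliton condition (\ref{eq:mcf-soliton}) into a pointwise equation on $\Gamma$ and then to evaluate it for a graph. The translation field $X=(0,1)=\partial_t$ generates the vertical translations $\phi_t(x,s)=(x,s+t)$, which are isometries of $N\times\mathbb{R}$. Writing $f_t=\phi_t\circ\widetilde{f}_t$ and differentiating in $t$ gives $\partial_t f_t=X\circ f_t+(\phi_t)_*\partial_t\widetilde{f}_t$. Since $\phi_t$ is an isometry carrying the normal space of $\widetilde{f}_t$ onto that of $f_t$, condition (\ref{eq:mcf-soliton}) says precisely that $(\phi_t)_*\partial_t\widetilde{f}_t$ is tangent to $M_t$, so taking the normal component yields $(\partial_t f_t)^{\bot_{f_t}}=X^{\bot_{f_t}}$. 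Comparing with the flow equation (\ref{eq:mcf}), the soliton condition reduces to the pointwise equation $X^{\bot}=-\|H\|^{-2}H$ on $\Gamma$.

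Next I would compute both sides for the graph. With a local orthonormal frame on $N$, set $W:=\sqrt{1+\|\nabla u\|^2}$; the upward unit normal is $\nu=W^{-1}(\partial_t-\nabla u)$, so that $\langle X,\nu\rangle=W^{-1}$ and hence $X^{\bot}=\langle X,\nu\rangle\nu=W^{-1}\nu$. The mean curvature vector is $H=H_0\,\nu$ with scalar mean curvature $H_0=\mathrm{div}(W^{-1}\nabla u)$ taken with respect to $\nu$; hence $\|H\|^2=H_0^{\,2}$ and $-\|H\|^{-2}H=-H_0^{-1}\nu$. Projecting the pointwise soliton equation onto $\nu$ then collapses it to the scalar identity $H_0=-W$.

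Finally I would expand $H_0=\mathrm{div}(W^{-1}\nabla u)=W^{-1}\Delta u-\tfrac12 W^{-3}\nabla u(\|\nabla u\|^2)$ and multiply $H_0=-W$ by $W$, which gives exactly (\ref{eq:graph-soliton}). For the converse, given $u$ solving (\ref{eq:graph-soliton}) I would put $f_t(x)=(x,u(x)+t)$; then $\widetilde{f}_t=\phi_t^{-1}\circ f_t=f_0$ is constant in $t$, so (\ref{eq:mcf-soliton}) holds trivially, and the same translation-invariance argument gives $(\partial_t f_t)^{\bot}=X^{\bot}$ for all $t$. Since (\ref{eq:graph-soliton}) is equivalent to $H_0=-W\ne0$, the identity $X^{\bot}=-\|H_t\|^{-2}H_t$ holds at $t=0$ and, by invariance, for every $t$; as the flow equation (\ref{eq:mcf}) constrains only the normal velocity, $\{M_t\}$ is an inverse mean curvature flow and a translating soliton. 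The main obstacle is the sign-consistent derivation of $H_0=\mathrm{div}(W^{-1}\nabla u)$ on a general $N$, fixing the orientation of $\nu$ so that the additive constant in (\ref{eq:graph-soliton}) comes out $+1$, together with the careful justification that $(\partial_t f_t)^{\bot}=X^{\bot}$.
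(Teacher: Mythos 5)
Your proposal is correct and follows essentially the same route as the paper: reduce the soliton condition to the pointwise identity $(X\circ f)^{\bot_f}=-\|H\|^{-2}H$ (which the paper imports from Hungerb\"{u}hler--Smoczyk and you derive directly from (\ref{eq:mcf}) and (\ref{eq:mcf-soliton})), show it is equivalent to (\ref{eq:graph-soliton}) by computing both sides on the graph, and obtain the converse by pushing the static identity forward with the isometries $\phi_t$ exactly as the paper does. The only cosmetic difference is that you project onto the unit normal $\nu$ to get the scalar equation $\mathrm{div}(W^{-1}\nabla u)=-W$, while the paper compares the two normal vector fields in coordinates; the two computations agree, including signs.
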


\begin{proof}
Define the immersion $f$ of $M$ into the product Riemannian manifold $N\times\mathbb{R}$ by $f(x)=(x,u(x)),~x\in M$ and define the Killing vector $X=(0,1)\in T(N\times\mathbb{R})=TN\oplus T\mathbb{R}$.
Denote the mean curvature vector field of $f$ by $H$.
According to Hungerb\"{u}hler and Smoczyk\cite{HS} in the case of a soliton for the mean curvature flow, if the inverse mean curvature flow starting from $\Gamma$ is translating soliton, we find that
\begin{equation}
\left(X\circ f\right)^{\bot_f}=-\frac{1}{\|H\|^2}H.\label{eq:soliton0}
\end{equation}
Let $(x^1,\cdots x^n,s)$ be local coordinates of $N\times\mathbb{R}$.
By $X=\frac{\partial}{\partial s}$ and $f(x)=(x,u(x)),~x\in M$, we find
\begin{align*}
&\left(X\circ f\right)^{\bot_f}=\frac{\partial}{\partial s}-\frac{1}{1+\|\nabla u\|^2}df(\nabla u)\\
&\frac{1}{\|H\|^2}H=\frac{1+\|\nabla u\|^2}{\Delta u-\frac{\nabla u(\|\nabla u\|^2)}{2(1+\|\nabla u\|^2)}}\left(\frac{\partial}{\partial s}-\frac{1}{1+\|\nabla u\|^2}df(\nabla u)\right).
\end{align*}
Therefore, we obtain that (\ref{eq:soliton0}) is equivalent to (\ref{eq:graph-soliton}).

Conversely, if $u$ satisfies {\rm (\ref{eq:graph-soliton})}, we find that $f$ satisfies (\ref{eq:soliton0}).
Then, for the one-parameter transformation $\{\phi_t\}_{t\in\mathbb{R}}$ associated to $X$ on $N\times\mathbb{R}$, since $\phi_t$'s are isometries and $f$ satisfies {\rm (\ref{eq:soliton0})}, we find that $f_t=\phi_t\circ f$ satisfies
\begin{align*}
\left(\frac{\partial f_t}{\partial t}\right)^{\bot_{f_t}}+\frac{1}{\|H_t\|^2}H_t&=d\phi_t\left((X\circ f)^{\bot_f}+\frac{1}{\|H\|^2}H\right)\\
&=0,
\end{align*}
and $\{f_t\}_{t\in \mathbb{R}}$ satisfies (\ref{eq:mcf}).
So, $\{M_t\}_{t\in\mathbb{R}}$ is the inverse mean curvature flow.
Further, we find that $f_t$ satisfies (\ref{eq:mcf-soliton}) from $\phi_t^{-1}\circ f_t=f$.
Therefore, $\{M_t\}_{t\in\mathbb{R}}$ is a translating soliton.
Then, we have $f_t(x)=(x,u(x)+t),~x\in M$.
\end{proof}

We consider the case where $u$ is a composition of an isoparametric function $r$ and some function $V$.
A non-constant $C^{\infty}$-function $r:N\to\mathbb{R}$ is called an isoparametric function if there exist $C^{\infty}$-functions $\alpha,\beta$ such that
\begin{equation*}
\begin{cases}
\|\nabla r\|^2=\alpha\circ r\\
\Delta r=\beta\circ r.
\end{cases}
\end{equation*}
Then, the level set of $r$ with respect to a regular value is called an isoparametric hypersurface.
For Lemma \ref{lemma:graph-soliton-condition}, considering the case where $u$ is the composition of the isoparametric function $r : N\to\mathbb{R}$ and some function $V$, we obtain the following proposition.
\begin{proposition}
Let $r:N\to\mathbb{R}$ be an isoparametric function on $N$.
If  the inverse mean curvature flow starting from $\Gamma$ is a translating soliton and if there exists a $C^{\infty}$-function $V$ on an interval $J\subset r(N)$ such that $u=(V\circ r)\vert_{r^{-1}(J)}$, the function $V$ satisfies
\begin{equation}
2\alpha V''+2\alpha^2V'^4+\alpha(2\beta-\alpha') V'^3+4\alpha V'^2+2\beta V'+2=0,\label{eq:isopara-soliton}
\end{equation}
where $'$ denotes derivative on $J$ and $\alpha,\beta$ are $C^{\infty}-$functions which satisfy $\|\nabla r\|^2=\alpha\circ r,~\Delta r=\beta\circ r$.\label{eq:isopara-graph-soliton}
Conversely, if $V$ satisfies {\rm (\ref{eq:isopara-soliton})}, the family of the images $\{M_t\}_{t\in\mathbb{R}}$ defined by $f_t(x)=(x,(V\circ r)(x)+t),~x\in M$ and $M_t=f_t(M)$ is the translating soliton for the inverse mean curvature flow.
\end{proposition}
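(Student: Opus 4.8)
The plan is to insert $u=V\circ r$ into the soliton equation (\ref{eq:graph-soliton}) of Lemma \ref{lemma:graph-soliton-condition} and to exploit the isoparametric hypotheses $\|\nabla r\|^2=\alpha\circ r$ and $\Delta r=\beta\circ r$ to collapse the PDE into an ODE in the single variable $r$. The guiding principle is that every quantity appearing in (\ref{eq:graph-soliton}) turns out to be a function of $r$ alone, so that the identity (\ref{eq:graph-soliton}) --- which a priori holds pointwise on $r^{-1}(J)$ --- is forced to hold as an identity of functions of $r$ on $J$, and this identity is exactly (\ref{eq:isopara-soliton}).

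First I would compute each term by the chain rule, abbreviating $V'=V'\circ r$, $\alpha=\alpha\circ r$, $\beta=\beta\circ r$, and so on. Since $\nabla u=V'\,\nabla r$, one gets $\|\nabla u\|^2=V'^2\alpha$, and, using $\operatorname{div}(fX)=\langle\nabla f,X\rangle+f\operatorname{div}X$ with $f=V'\circ r$ and $X=\nabla r$, one gets $\Delta u=\alpha V''+\beta V'$. The only delicate term is the directional derivative $\nabla u(\|\nabla u\|^2)$. Here the isoparametric property is decisive: because $\|\nabla u\|^2=V'^2\alpha$ is already a function of $r$, its gradient is a multiple of $\nabla r$, namely $\nabla(\|\nabla u\|^2)=(2V'V''\alpha+V'^2\alpha')\nabla r$, so that $\nabla u(\|\nabla u\|^2)=\langle\nabla(\|\nabla u\|^2),\nabla u\rangle=(2V'V''\alpha+V'^2\alpha')\,V'\alpha$.

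The remaining step is algebraic: multiply (\ref{eq:graph-soliton}) through by the denominator $2(1+\|\nabla u\|^2)=2(1+V'^2\alpha)$ and collect powers of $V'$. The point I expect to be the crux of the verification is a cancellation: the term $2\alpha^2V'^2V''$ produced by $2V'^2\alpha\cdot\Delta u$ is exactly annihilated by the $2\alpha^2V'^2V''$ coming from $-\nabla u(\|\nabla u\|^2)$, so the equation remains first order in $V''$ and reduces precisely to (\ref{eq:isopara-soliton}); the two copies of $2\alpha V'^2$ likewise merge into the coefficient $4\alpha$, and the term $-\alpha\alpha'V'^3$ combines with $2\alpha\beta V'^3$ to give $\alpha(2\beta-\alpha')V'^3$. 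Finally, the converse is immediate: the computation above is an equivalence, so a function of the form $u=V\circ r$ satisfies (\ref{eq:graph-soliton}) if and only if $V$ satisfies (\ref{eq:isopara-soliton}), whence the converse direction of Lemma \ref{lemma:graph-soliton-condition} produces the translating soliton $f_t(x)=(x,(V\circ r)(x)+t)$.
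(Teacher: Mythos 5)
Your proposal is correct and follows essentially the same route as the paper: the same chain-rule computations $\|\nabla u\|^2=(\alpha V'^2)\circ r$, $\Delta u=(\alpha V''+\beta V')\circ r$, $\nabla u(\|\nabla u\|^2)=(\alpha V'^2(2\alpha V''+\alpha'V'))\circ r$, followed by clearing the denominator $2(1+\alpha V'^2)$ and expanding, with the cancellation of the $2\alpha^2V'^2V''$ terms you identify being exactly what happens in the paper's reduction to (\ref{eq:isopara-soliton}). Your treatment of the converse via the equivalence of the algebraic reduction and the converse direction of Lemma \ref{lemma:graph-soliton-condition} also matches the paper's (implicit) argument.
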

\begin{proof}
From (\ref{eq:graph-soliton}), we have
\begin{equation*}
2(1+\|\nabla u\|^2)(\Delta u+\|\nabla u\|^2+1)-\nabla u(\|\nabla u\|^2)=0.
\end{equation*}
By $u=V\circ r$, we find
\begin{equation*}
\begin{split}
\|\nabla u\|^2&=\left(\alpha V'^2\right)\circ r,\\
\nabla u(\|\nabla u\|^2)&=\left(\alpha V'^2\left(2\alpha V''+\alpha'V'\right)\right)\circ r,\\
\Delta u&=\left(\alpha V''+\beta V'\right)\circ r.
\end{split}
\end{equation*}
Therefore, (\ref{eq:graph-soliton}) is reduced to the following equation
\begin{equation*}
2(1+\alpha V'^2)(\alpha V''+\beta V'+\alpha V'^2+1)-\alpha V'^2(\alpha'V'+2\alpha V'')=0.
\end{equation*}
From this equation, we obtain (\ref{eq:isopara-soliton}).
\end{proof}

In the case where $N$ is the $n$-dimensional unit spere $\mathbb{S}^n$, M\"{u}nzner\cite{M,M2} show the following theorem for an isoparametric function $r$ on $\mathbb{S}^n$.
\begin{theorem}\label{thm:Mun}
{\rm (M\"{u}nzner\cite{M,M2})}
(i) Let $r$ be an isoparametric function on $\mathbb{S}^n$.
Then, $r$ satisfies
\begin{equation}\label{isopara-sphere}
\begin{cases}
\|\nabla r\|^2=k^2(1-r^2)\\
\displaystyle\Delta r=\frac{m_2-m_1}{2}k^2-k(n+k-1)r.
\end{cases}
\end{equation}
By the first equation of {\rm (\ref{isopara-sphere})}, we find that $r(\mathbb{S}^n)=[-1,1]$.
Here, we consider that the isoparametric hypersurface defined by the level set of $r$ has $k$ distinct principal curvatures $\lambda_1>\cdots>\lambda_k$ with respective multiplicities $m_1,\cdots,m_k$.

(ii) The number $k$ of distinct principal curvatures is $1$, $2$, $3$, $4$ or $6$.

(iii) If $k=1$, $3$, $6$, then the mulitiplicities are equal.
If $k=2$, $4$, then there are at most two distinct multiplicities $m_1$, $m_2$.
\end{theorem}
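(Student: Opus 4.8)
The plan is to reproduce Münzner's original argument, which separates into a differential-geometric part establishing (i) and the easy half of (iii), and a topological part establishing (ii) and the refinement of (iii). Throughout I would first reduce to the canonical representative: starting from the weak definition $\|\nabla r\|^2=\alpha\circ r$, $\Delta r=\beta\circ r$, one shows that each regular level set $M_c=r^{-1}(c)$ is a hypersurface with \emph{constant} principal curvatures. The condition $\|\nabla r\|^2=\alpha\circ r$ forces the integral curves of $\nu=\nabla r/\|\nabla r\|$ to be geodesics, and $\Delta r=\beta\circ r$ forces the mean curvature of $M_c$ to be constant along $M_c$; differentiating repeatedly (Cartan's identity) upgrades constancy of the mean curvature to constancy of every principal curvature. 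One then reparametrizes $r$ by arclength $\phi$ along the normal geodesics measured from one focal set, i.e.\ replaces $r$ by the restriction to $\mathbb{S}^n$ of the Cartan--Münzner polynomial, so that $\|\nabla\phi\|=1$.

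For part (i), I would track how the principal curvatures evolve along a normal geodesic. On $\mathbb{S}^n$ the shape operator satisfies a Riccati equation whose solution is $\lambda_i(\phi)=\cot(\phi-\theta_i)$ for constants $\theta_i$. The requirement that the family $M_\phi$ meet a focal submanifold of the correct codimension at \emph{both} ends forces the angles to be equally spaced, $\theta_i=\theta_1+(i-1)\pi/k$, so that $M_\phi$ is focal exactly when $k\phi\in\pi\mathbb{Z}$. Setting $r:=\cos(k\phi)$ then gives $\|\nabla r\|^2=k^2\sin^2(k\phi)=k^2(1-r^2)$ and, since $\phi$ ranges over $[0,\pi/k]$, the range $r(\mathbb{S}^n)=[-1,1]$. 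For the Laplacian I would use $\Delta\phi=-H(\phi)$, where $H(\phi)=\sum_i m_i\cot(\phi-\theta_i)$ is the mean curvature of $M_\phi$; the identity $\sum_{j=0}^{k-1}\cot(\phi+j\pi/k)=k\cot(k\phi)$ together with the alternating multiplicities collapses $H$ into closed form and yields $\Delta r=\tfrac{m_2-m_1}{2}k^2-k(n+k-1)r$, which is the second line of (\ref{isopara-sphere}).

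For the easy half of (iii), the equal spacing of the $\theta_i$ produces the symmetry behind Münzner's multiplicity relation $m_i=m_{i+2}$ (indices mod $k$): reflecting the normal geodesics in the two focal submanifolds and composing the two reflections transforms the foliation so as to shift the curvature distributions by two indices while preserving their dimensions. When $k$ is odd, $\gcd(2,k)=1$ puts all indices in a single orbit, forcing $m_1=\cdots=m_k$; when $k$ is even the indices split into two classes, leaving at most two distinct values $m_1,m_2$. This already gives (iii) for $k=1,3$ and the ``at most two'' statement for $k=2,4$; the sharper claim $m_1=m_2$ for $k=6$ falls out of the cohomological divisibility relations used next. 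Finally, for part (ii), I would extend $r$ to the homogeneous degree-$k$ polynomial $F$ on $\mathbb{R}^{n+1}$ with $\|\nabla F\|^2=k^2|x|^{2k-2}$ and $\Delta F=\tfrac{m_2-m_1}{2}k^2|x|^{k-2}$ (the homogeneous extensions of (\ref{isopara-sphere})), observe that the focal submanifolds $M_\pm=F^{-1}(\pm1)\cap\mathbb{S}^n$ have codimensions $m_1+1$ and $m_2+1$, and that $\mathbb{S}^n=B_+\cup B_-$ is a union of two tubular disk bundles glued along $M=\partial B_+=\partial B_-$. Feeding this into the Mayer--Vietoris sequence together with the Thom/Gysin sequences of the associated sphere bundles, one computes the full $\mathbb{Z}_2$-cohomology \emph{ring} of $M$ and of $M_\pm$; comparing the resulting Poincaré polynomial against Poincaré duality on the closed $(n-1)$-manifold $M$ forces a rigid polynomial identity in $k$ whose only solutions are $k\in\{1,2,3,4,6\}$, and the same computation pins down $m_1=m_2$ when $k=6$.

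The \textbf{main obstacle} is this last topological step. The geometry of part (i) and the reflection argument of (iii) are essentially Riccati-equation bookkeeping once equal spacing is in hand, and equal spacing is the only genuinely delicate geometric point (it uses both ends of each normal geodesic being focal and the $\pi$-periodicity of the Riccati flow on $\mathbb{S}^n$). By contrast, extracting $k\in\{1,2,3,4,6\}$ requires the complete and careful computation of the cohomology \emph{ring}, not merely the Betti numbers, of $M$ from the disk-bundle decomposition, followed by recognizing that Poincaré duality converts the ring relation into a number-theoretic constraint. This is the technical heart of Münzner's two papers, and I would carry it out by closely following \cite{M,M2} rather than seeking a shortcut.
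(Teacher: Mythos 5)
First, note that the paper does not prove this statement at all: Theorem \ref{thm:Mun} is quoted as a known result of M\"{u}nzner \cite{M,M2}, so the only meaningful comparison is against M\"{u}nzner's original argument, which is exactly what you set out to reconstruct. Your outline of that argument is essentially faithful and correct in structure: the reduction from the weak isoparametric condition to parallel hypersurfaces with constant principal curvatures, the Riccati equation $\lambda_i(\phi)=\cot(\phi-\theta_i)$ along normal geodesics, the equal spacing $\theta_i=\theta_1+(i-1)\pi/k$, the normalization $r=\cos(k\phi)$ giving $\|\nabla r\|^2=k^2(1-r^2)$ and $r(\mathbb{S}^n)=[-1,1]$, the cotangent identities collapsing $H(\phi)=\sum_i m_i\cot(\phi-\theta_i)$ into the stated formula for $\Delta r$ (using $n-1=\tfrac{k}{2}(m_1+m_2)$ for even $k$), the relation $m_i=m_{i+2}$ yielding (iii) for odd $k$ and ``at most two'' for even $k$, and the two-ball-bundle decomposition of $\mathbb{S}^n$ fed into Mayer--Vietoris/Gysin to force $k\in\{1,2,3,4,6\}$.

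There is, however, one genuine gap: your claim that $m_1=m_2$ for $k=6$ ``falls out of the cohomological divisibility relations'' is not correct. M\"{u}nzner's topological computation, for even $k$, leaves two possibly distinct multiplicities even when $k=6$; the equality $m_1=m_2$ (and in fact $m\in\{1,2\}$) in the case $k=6$ is a later theorem of Abresch (Math.\ Ann.\ 264, 1983), proved by a substantially finer analysis of the focal submanifolds rather than by the Mayer--Vietoris/Poincar\'{e}-duality bookkeeping you describe. So as written your plan does not deliver all of part (iii); you would need to import Abresch's theorem (or reproduce his argument) for the $k=6$ case. To be fair, the paper's own attribution of the full statement (iii) to M\"{u}nzner shares this looseness, but a proof following your outline alone would be incomplete at precisely this point. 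A second, much smaller caution: the step ``comparing the Poincar\'{e} polynomial against Poincar\'{e} duality forces a rigid polynomial identity'' understates what M\"{u}nzner actually needs --- it is the multiplicative (cup-product) structure of $H^*(M;\mathbb{Z}_2)$, not the Betti numbers, that produces the constraint, as you correctly emphasize earlier in the same sentence.
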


In the sequel, we assume that $N$ is the $n$-dimensional unit sphere $\mathbb{S}^n$ ($n\ge2$) and  $u=(V\circ r)\vert_{r^{-1}(J)}$ with an isoparametric function $r : \mathbb{S}^n\to\mathbb{R}$ and a $C^{\infty}$-function $V$ on interval $J\subset r(\mathbb{S}^n)=[-1,1]$.
From Theorem \ref{thm:Mun} (iii), we find
\begin{equation*}
m_2-m_1=
\begin{cases}
0\quad \hspace{1.9cm}(k=1,3,6)\\
2(m_2-\frac{n-1}{k})\quad(k=2,4).
\end{cases}
\end{equation*}
Therefore, substituting $\alpha$ and $\beta$ in Theorem \ref{thm:Mun} (\ref{isopara-sphere}) for the equation (\ref{eq:isopara-soliton}), we obtain
\begin{align}
V''(r)=&-k^2(1-r^2)V'(r)^4+k((n-1)(r-R))V'(r)^3-2V'(r)^2\nonumber\\
&+\frac{(n+k-1)r-(n-1)R}{k(1-r^2)}V'(r)-\frac{1}{k^2(1-r^2)},~~~r\in (-1,1),\label{eq:isopara-soliton-sphere2}
\end{align}
where $R\in(-1,1)$ is the constant defined by
\begin{equation*}
R:=
\begin{cases}
0\hspace{2.3cm}(k=1,3,6)\\
\displaystyle -1+\frac{km_2}{n-1}\quad(k=2,4).
\end{cases}
\end{equation*}
Here, we note that $m_2$ is equal to the multiplicity of the smallest principal curvature of the isoparametric hypersurface defined by the level set of $r$ in the case $k=2,4$.
The local existence of the solution $V$ of (\ref{eq:isopara-soliton-sphere2}) is clear.
To prove Theorem \ref{thm:shape-graph}, we consider the graph of the solution $V$ of (\ref{eq:isopara-soliton-sphere2}).
Define $\psi(r)=k\sqrt{1-r^2}V'(r)$.
Then, the equation (\ref{eq:isopara-soliton-sphere2}) is reduced to
\begin{equation}
\psi'(r)=-\frac{1}{k(1-r^2)}\left(\psi(r)^2+1\right)\hspace{-0.1cm}\left(\sqrt{1-r^2}\psi(r)^2-(n-1)(r-R)\psi(r)+\sqrt{1-r^2}\right).\label{eq:isopara-soliton-sphere3}
\end{equation}
Therefore, to obtain the behavior of the graph of $V$, first we consider the behavior of the solution $\psi$ of (\ref{eq:isopara-soliton-sphere3}).
Define $\eta_1(r)$ and $\eta_2(r)$ by
\begin{align*}
&\eta_1(r):=\frac{(n-1)(r-R)-\sqrt{((n-1)^2+4)r^2-2R(n-1)^2r+R^2(n-1)^2-4}}{2\sqrt{1-r^2}}\\
&\eta_2(r):=\frac{(n-1)(r-R)+\sqrt{((n-1)^2+4)r^2-2R(n-1)^2r+R^2(n-1)^2-4}}{2\sqrt{1-r^2}}.
\end{align*}
Also, define $a$, $b\in(-1,1)$ ($a<b$) by
\begin{align*}
a:=\frac{(n-1)^2R-2\sqrt{(n-1)^2(1-R^2)+4}}{(n-1)^2+4}\\
b:=\frac{(n-1)^2R+2\sqrt{(n-1)^2(1-R^2)+4}}{(n-1)^2+4}.
\end{align*}
Then, we find $a<R<b$ and obtain the following lemma.
\begin{lemma}\label{thm:sign}
~
\begin{itemize}
\item[(i)] When $r\in(-1,a]\cup[b,1)$,
\begin{itemize}
\item [(a)] if $\eta_1(r)<\psi(r)<\eta_2(r)$, then $\psi'(r)>0$,
\item [(b)] if $\psi(r)=\eta_1(r)$ or $\psi(r)=\eta_2(r)$, then $\psi'(r)=0$,
\item [(c)] if $\psi(r)<\eta_1(r)$ or $\psi(r)>\eta_2(r)$, then $\psi'(r)<0$.
\end{itemize}
\item[(ii)] When $r\in(a,b)$, $\psi'(r)<0$.
\end{itemize}
\end{lemma}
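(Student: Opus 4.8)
The plan is to reduce the entire statement to the sign analysis of a single quadratic polynomial in the variable $\psi$. The starting point is the observation that in the right-hand side of (\ref{eq:isopara-soliton-sphere3}) the factor $-\frac{1}{k(1-r^2)}\left(\psi(r)^2+1\right)$ is strictly negative for every $r\in(-1,1)$, since $k>0$, $1-r^2>0$ and $\psi(r)^2+1>0$. Consequently the sign of $\psi'(r)$ is exactly opposite to the sign of the quadratic factor
\begin{equation*}
Q_r(\psi):=\sqrt{1-r^2}\,\psi^2-(n-1)(r-R)\psi+\sqrt{1-r^2}.
\end{equation*}
Everything thus hinges on understanding where $Q_r$ is positive, zero or negative as a function of $\psi$.

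Next I would view $Q_r$ as a quadratic in $\psi$ with positive leading coefficient $\sqrt{1-r^2}$, so that its graph is an upward-opening parabola. A direct computation shows that its discriminant is
\begin{equation*}
D(r)=(n-1)^2(r-R)^2-4(1-r^2)=((n-1)^2+4)r^2-2R(n-1)^2r+R^2(n-1)^2-4,
\end{equation*}
and that whenever $D(r)\ge0$ the two real roots of $Q_r(\psi)=0$ are, by the quadratic formula, exactly $\eta_1(r)\le\eta_2(r)$, since the radicand in $\eta_1,\eta_2$ is precisely $D(r)$. The key algebraic point is that $D(r)$ is itself an upward-opening quadratic in $r$, and solving $D(r)=0$ by the quadratic formula yields precisely the two values $a$ and $b$. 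Hence $D(r)\ge0$ on $(-1,a]\cup[b,1)$ and $D(r)<0$ on $(a,b)$.

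The conclusion then follows by a case distinction. On the interval $(a,b)$ we have $D(r)<0$, so $Q_r$ has no real root and, being an upward parabola, is strictly positive for every $\psi$; therefore $\psi'(r)<0$, which is part (ii). On $(-1,a]\cup[b,1)$ we have $D(r)\ge0$, so $Q_r$ has the real roots $\eta_1(r)\le\eta_2(r)$; since $Q_r$ opens upward, $Q_r(\psi)<0$ precisely for $\eta_1(r)<\psi<\eta_2(r)$, $Q_r(\psi)=0$ precisely for $\psi=\eta_1(r)$ or $\psi=\eta_2(r)$, and $Q_r(\psi)>0$ otherwise. Combining each of these with the negative sign of the prefactor gives the three subcases (a), (b), (c) of part (i).

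I expect the only genuinely laborious step to be the verification that the roots of the quadratic $D(r)$ coincide with the constants $a$ and $b$; this is a routine but slightly tedious computation, in which the discriminant of $D$ (as a quadratic in $r$) simplifies to $16\bigl((n-1)^2(1-R^2)+4\bigr)$, matching the radicand appearing in the definitions of $a$ and $b$. The remaining ingredient, namely $a<R<b$, so that $(a,b)$ is a genuine subinterval of $(-1,1)$, is already recorded just before the statement and may be taken as given; alternatively it follows at once from $D(R)=4(R^2-1)<0$.
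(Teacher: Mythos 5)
Your proposal is correct and is essentially the paper's own proof: your quadratic $Q_r(\psi)$ is the paper's auxiliary function $A(x,r)$, and your discriminant $D(r)$ expands to exactly the paper's $B(r)$, with the roots of $Q_r$ being $\eta_1(r),\eta_2(r)$ and the roots of $D$ being $a,b$ in both treatments. The only cosmetic difference is that the paper establishes the sign patterns of $A$ and $B$ by completing the square, whereas you invoke the quadratic formula and the upward-parabola picture, which amounts to the same computation.
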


\begin{proof}
Define $A(x,r)$ and $B(r)$ by
\begin{align*}
&A(x,r):=\sqrt{1-r^2}x^2-(n-1)(r-R)x+\sqrt{1-r^2},~~~(x,r)\in\mathbb{R}\times(-1,1)\\
&B(r):=((n-1)^2+4)r^2-2(n-1)^2Rr+(n-1)^2R^2-4,~~~r\in(-1,1).
\end{align*}
Then, we have
\begin{align*}
&A(x,r)=\sqrt{1-r^2}\left(x-\frac{(n-1)(r-R)}{2\sqrt{1-r^2}}\right)^2-\frac{1}{4\sqrt{1-r^2}}B(r)\\
&B(r)=\left((n-1)^2+4\right)\left(r-\frac{(n-1)^2R}{(n-1)^2+4}\right)^2-\frac{1}{(n-1)^2+4}\left(4(n-1)^2(1-R^2)+16\right).
\end{align*}
Therefore, we find that if $r\in(-1,a]\cup[b,1)$, then $B(r)>0$, if $r\in(a,b)$, then $B(r)<0$, and if $r\in\{a,b\}$, then $B(r)=0$.
Further, we find that when $r\in(-1,a]\cup[b,1)$, if $x\in(\eta_1(r),\eta_2(r))$, then $A(x,r)<0$, if $x\in(-\infty,\eta_1(r))\cup(\eta_2(r),\infty)$, then $A(x,r)>0$, and if $x\in\{\eta_1(r),\eta_2(r)\}$, then $A(\psi,r)=0$.
Also, when $r\in(a,b)$, we find that $A(x,r)>0$.
Since the equation (\ref{eq:isopara-soliton-sphere3}) is reduced to
\begin{equation*}
\psi'(r)=-\frac{1}{k(1-r^2)}\left(\psi(r)^2+1\right)A(\psi(r),r),
\end{equation*}
we obtain the statement of this lemma.
\end{proof}

\begin{figure}[H]
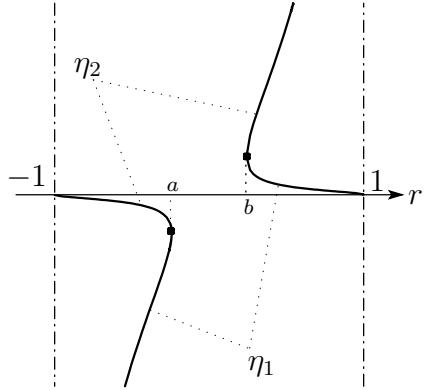

\centering
\scalebox{1.0}{\unitlength 0.1in%
% [inline block 1: 1 envs, 12427 chars -> data_tex | \begin{picture}(20.9000,20.0000)(1.4000,-22.0000)% % LINE 2 3 3 0 Black White  ...]
}
\caption{The graph of $\eta_1$ and $\eta_2$}
\label{eta}
\end{figure}

For the behavior of the graph of the solution $\psi$ of (\ref{eq:isopara-soliton-sphere3}), we obtain following lemmata.

\begin{lemma}\label{thm:psi-shape1}
If there exists $r_0\in(-1,a]$ with $\psi(r_0)<\eta_1(r_0)$, or if there exists $r_0\in(a,1)$ with $\psi(r_0)<0$, then there exists $r_1\in(r_0,1)$ such that 
\begin{equation*}
\lim_{r\uparrow r_1}\psi(r)=-\infty.
\end{equation*}
\end{lemma}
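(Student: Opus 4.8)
The plan is to show that under either hypothesis the solution $\psi$ is trapped, for increasing $r$, in a region where Lemma~\ref{thm:sign} forces $\psi'<0$, so that $\psi$ decreases monotonically, and then to exhibit a non-integrable singularity in the governing equation (\ref{eq:isopara-soliton-sphere3}) that drives $\psi$ down to $-\infty$ at some $r_1$ strictly less than $1$.

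First I would establish the \textbf{trapping}. Recall from the proof of Lemma~\ref{thm:sign} that $A(x,r)=\sqrt{1-r^2}(x-\eta_1(r))(x-\eta_2(r))$, and that the roots satisfy $\eta_1\eta_2=1$ with sum $\frac{(n-1)(r-R)}{\sqrt{1-r^2}}$; hence $\eta_1<\eta_2<0$ on $(-1,a]$ and $0<\eta_1<\eta_2$ on $[b,1)$, since $a<R<b$. Consider the case $r_0\in(-1,a]$ with $\psi(r_0)<\eta_1(r_0)$. I would run a barrier argument against the nullcline $\eta_1$: if $r_2>r_0$ were the first point in $(r_0,a]$ with $\psi(r_2)=\eta_1(r_2)$, then $\psi<\eta_1$ and hence $\psi'<0$ on $[r_0,r_2)$, so $\psi(r_2)<\psi(r_0)<\eta_1(r_0)\le\eta_1(r_2)$, the last inequality using that $\eta_1$ is increasing on $(-1,a]$ (visible in Figure~\ref{eta}, or checked by differentiating); this contradicts $\psi(r_2)=\eta_1(r_2)$. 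Thus $\psi$ stays below $\eta_1<0$ on $[r_0,a]$. On $(a,b)$ Lemma~\ref{thm:sign}(ii) gives $\psi'<0$ unconditionally, and on $[b,1)$ we have $\psi<0<\eta_1$, whence $\psi'<0$ again by Lemma~\ref{thm:sign}(i)(c). The case $r_0\in(a,1)$ with $\psi(r_0)<0$ is the tail of the same picture and needs no barrier. In all cases $\psi$ is negative and strictly decreasing on its interval of existence to the right of $r_0$, so it can only escape to $-\infty$.

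Next I would prove the \textbf{finite-$r$ blow-up}. Choosing a point $r_0''\in(\max\{r_0,R\},1)$ which the solution reaches with $\psi(r_0'')<0$ (if it has already blown up before reaching such a point, we are done), I would discard the two nonnegative terms $\sqrt{1-r^2}\,\psi^2$ and $\sqrt{1-r^2}$ in $A$ and keep the middle term, which is positive here because $r>R$ and $\psi<0$:
\[
A(\psi,r)\ge -(n-1)(r-R)\psi=(n-1)(r-R)|\psi|.
\]
Substituting into (\ref{eq:isopara-soliton-sphere3}) and using $\psi^2+1\ge\psi^2$ together with $\frac{r-R}{1-r^2}\ge\frac{r_0''-R}{2(1-r)}$ on $[r_0'',1)$ yields a differential inequality of the form $\psi'\le -c\,\dfrac{|\psi|^3}{1-r}$ with $c>0$. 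Writing $w=-\psi$ and integrating $\frac{w'}{w^3}\ge\frac{c}{1-r}$ from $r_0''$ gives $c\ln\frac{1-r_0''}{1-r}<\frac{1}{2w(r_0'')^2}$ for every $r$ in the existence interval, which forces $r<1-(1-r_0'')\exp\!\big(-\tfrac{1}{2c\,w(r_0'')^2}\big)<1$. Hence the maximal solution terminates at some $r_1<1$, and since the coefficients of (\ref{eq:isopara-soliton-sphere3}) are regular there while $\psi$ is monotone decreasing, the only possibility is $\lim_{r\uparrow r_1}\psi(r)=-\infty$.

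The \textbf{main obstacle} is the trapping step in the case $r_0\in(-1,a]$: one must rule out the trajectory crossing the nullcline $\eta_1$ upward into the strip $(\eta_1,\eta_2)$, where Lemma~\ref{thm:sign}(i)(a) would make $\psi'$ positive and could obstruct the descent; this is exactly where the monotonicity of $\eta_1$ enters. A secondary subtlety is that the naive estimate $A\ge\sqrt{1-r^2}(\psi^2+1)$ only produces the \emph{integrable} singularity $(1-r^2)^{-1/2}$, which does not by itself force blow-up before $r=1$; it is essential to retain the middle term of $A$, whose coefficient carries the non-integrable factor $(1-r)^{-1}$.
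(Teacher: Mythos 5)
Your proposal is correct, and while it follows the paper's overall strategy (trap the trajectory in the region where $\psi'<0$, then let the cross term of $A$ force finite blow-up), the execution differs at both steps. The paper compresses your entire trapping argument into the bare assertion that $\psi'(r)<0$ and $\psi(r)<\psi(r_0)$ for $r>r_0$; your barrier argument against the nullcline $\eta_1$ is what actually justifies this, and your monotonicity claim is sound: on the nullcline $\psi+1/\psi=(n-1)(r-R)/\sqrt{1-r^2}$, the right side has derivative $(n-1)(1-Rr)(1-r^2)^{-3/2}>0$, and $\eta_1\le-1$ lies on the increasing branch of $x\mapsto x+1/x$, so $\eta_1$ is increasing on $(-1,a]$. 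For the blow-up, the paper keeps the whole quadratic $A$ with $\psi(r_0)$ frozen, divides by $1+\psi^2$, and integrates to get $\arctan\psi(r)<h_1(r)$ with an explicit $h_1\to-\infty$ as $r\uparrow1$ (the $\log(1-r)$ term carrying the negative coefficient $\psi(r_0)$ is exactly the non-integrable singularity you isolate), then finds $\overline{r}_1$ with $h_1(\overline{r}_1)=-\pi/2$. You instead discard the two $\sqrt{1-r^2}$ terms, keep $\psi^2+1\ge\psi^2$ live, and obtain the cubic inequality $w'\ge c\,w^3/(1-r)$, whose integration bounds $r$ away from $1$ on the maximal interval, followed by the standard escape argument. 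The paper's route buys an explicit comparison function; yours buys cleaner sign bookkeeping: the paper's pointwise bound $A(\psi(r),r)>(1+\psi(r_0)^2)\sqrt{1-r^2}-\psi(r_0)(n-1)(r-R)$ actually fails in its middle term when $r<R$ (there $-(n-1)(r-R)\psi(r)<-(n-1)(r-R)\psi(r_0)$ since $\psi(r)<\psi(r_0)$), so the paper's estimate is strictly valid only from $\max\{r_0,R\}$ onward — precisely the restriction you build in by choosing $r_0''>\max\{r_0,R\}$.
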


\begin{proof}
When $r>r_0$, we find $\psi'(r)<0$ and $\psi(r)<\psi(r_0)$.
Then, we have
\begin{align*}
\psi'(r)&=-\frac{1}{k(1-r^2)}\left(\psi(r)^2+1\right)\left(\sqrt{1-r^2}\psi(r)^2-(n-1)(r-R)\psi(r)+\sqrt{1-r^2}\right)\\
&<-\frac{1}{k(1-r^2)}\left(\psi(r)^2+1\right)\left((1+\psi(r_0)^2)\sqrt{1-r^2}-\psi(r_0)(n-1)(r-R)\right).
\end{align*} 
Therefore, we find
\begin{equation*}
\frac{\psi'(r)}{1+\psi(r)^2}<-\frac{1+\psi(r_0)^2}{k\sqrt{1-r^2}}+\frac{\psi(r_0)(n-1)r}{k(1-r^2)}-\frac{\psi(r_0)(n-1)R}{k(1-r^2)}.
\end{equation*}
Integrating from $r_0$ to $r$, we have
\begin{align*}
\arctan{\psi(r)}<&-\frac{1+\psi(r_0)^2}{k}\arcsin{r}-\frac{\psi(r_0)(n-1)}{2k}\log{(1-r^2)}-\frac{\psi(r_0)(n-1)R}{2k}\log{\frac{1+r}{1-r}}\\
&+\frac{1+\psi(r_0)^2}{k}\arcsin{r_0}+\frac{\psi(r_0)(n-1)}{2k}\log{(1-r_0^2)}+\frac{\psi(r_0)(n-1)R}{2k}\log{\frac{1+r_0}{1-r_0}}\\
&+\arctan{\psi(r_0)}=:h_1(r).
\end{align*}
Then, $h_1$ is decreasing on $(r_0,1)$ and $h_1(r_0)=\arctan{\psi(r_0)}$, $\lim_{r\uparrow 1}h_1(r)=-\infty$.
Therefore, there exists $\overline{r}_1\in(r_0,1)$ with $h_1(\overline{r}_1)=-\frac{\pi}{2}$ and
\begin{equation*}
\psi(r)<\tan{h_1(r)}\rightarrow-\infty\quad(~r~\uparrow~\overline{r}_1~)
\end{equation*}
\end{proof}

\begin{figure}[H]
\centering
\scalebox{1.0}{\unitlength 0.1in%
% [inline block 2: 1 envs, 12976 chars -> data_tex | \begin{picture}(20.9000,20.0000)(1.4000,-22.0000)% % FUNC 1 2 3 0 Black White  ...]
}
\caption{The behavior of the graph of $\psi$ in Lemma \ref{thm:psi-shape1}}
\label{eta}
\end{figure}

\begin{lemma}\label{thm:psi-shape2}
If there exists $r_0\in\left(b,1\right)$ with $\eta_1(r_0)<\psi(r_0)<\eta_2(r_0)$, then
\begin{equation*}
\lim_{r\uparrow 1}\psi(r)=\infty.
\end{equation*}
\end{lemma}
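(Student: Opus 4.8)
The plan is to realize the two curves $\eta_1,\eta_2$ as one-sided barriers, confine the trajectory of $\psi$ to the strip between them where $\psi'>0$, and then use the ODE (\ref{eq:isopara-soliton-sphere3}) to force $\psi\to\infty$ as $r\uparrow1$. The decisive preliminary observation is that $\eta_1(r)$ and $\eta_2(r)$ are precisely the two roots in $x$ of the quadratic $A(x,r)=\sqrt{1-r^2}\,x^2-(n-1)(r-R)x+\sqrt{1-r^2}$, so by Vieta's formulas
\[
\eta_1(r)\,\eta_2(r)=1,\qquad \eta_1(r)+\eta_2(r)=\frac{(n-1)(r-R)}{\sqrt{1-r^2}}=:S(r).
\]
On $(b,1)$ one has $r>R$, hence $S>0$ and therefore $\eta_1,\eta_2>0$; moreover $B(b)=0$ gives $S(b)=2$ and $S(r)\to+\infty$ as $r\uparrow1$. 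A one-line computation yields $S'(r)=(n-1)(1-Rr)(1-r^2)^{-3/2}>0$, so $S$ is strictly increasing. Writing $\eta_2=\tfrac12\bigl(S+\sqrt{S^2-4}\bigr)$ and $\eta_1=\tfrac12\bigl(S-\sqrt{S^2-4}\bigr)$, which are respectively increasing and decreasing in $S\ge2$, I conclude that on $(b,1)$ the function $\eta_2$ is strictly increasing (from $1$ up to $+\infty$) and $\eta_1$ is strictly decreasing (from $1$ down to $0^+$).

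Next I would run the invariant-region argument. Suppose the trajectory first leaves the strip at some $r_*\in(r_0,1)$, so that $\eta_1<\psi<\eta_2$ on $(r_0,r_*)$ while $\psi(r_*)=\eta_2(r_*)$ or $\psi(r_*)=\eta_1(r_*)$. In either case $A(\psi(r_*),r_*)=0$, so (\ref{eq:isopara-soliton-sphere3}) gives $\psi'(r_*)=0$. If $\psi(r_*)=\eta_2(r_*)$, then $\psi-\eta_2$ rises to $0$ at $r_*$, whence $(\psi-\eta_2)'(r_*)\ge0$; but $(\psi-\eta_2)'(r_*)=-\eta_2'(r_*)<0$, a contradiction. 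If $\psi(r_*)=\eta_1(r_*)$, then $\psi-\eta_1$ falls to $0$ at $r_*$, whence $(\psi-\eta_1)'(r_*)\le0$; but $(\psi-\eta_1)'(r_*)=-\eta_1'(r_*)>0$, again a contradiction. Hence $\eta_1(r)<\psi(r)<\eta_2(r)$ for every $r\in[r_0,1)$. In particular $\psi$ stays bounded on each compact subinterval, so the solution extends to all of $[r_0,1)$, and by Lemma \ref{thm:sign}(i)(a) it is strictly increasing, with $\psi(r)\ge\psi(r_0)>\eta_1(r_0)>0$.

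Finally I would establish the blow-up. Being increasing, $\psi$ has a limit $L\in(0,+\infty]$ as $r\uparrow1$. Assume $L<\infty$. Since $\sqrt{1-r^2}\to0$ while $\psi\to L$, we get $A(\psi(r),r)\to -(n-1)(1-R)L<0$; substituting into (\ref{eq:isopara-soliton-sphere3}) produces a constant $c>0$ and $r_1<1$ with $\psi'(r)\ge c\,(1-r^2)^{-1}$ on $(r_1,1)$. Because $\int^{1}(1-r^2)^{-1}\,dr=+\infty$, integration forces $\psi(r)\to+\infty$, contradicting $L<\infty$. Hence $L=+\infty$, which is the assertion.

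The only genuinely delicate point is the invariant-region step, and what makes it work is exactly the monotonicity directions $\eta_2'>0$ and $\eta_1'<0$ on $(b,1)$; these are immediate once one notices the identity $\eta_1\eta_2\equiv1$, which reduces the behavior of both nullclines to that of the single monotone quantity $S$. Everything else is a routine barrier estimate.
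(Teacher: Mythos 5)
Your proof is correct, and it takes a genuinely different route from the paper's. The paper argues by contradiction from a global bound $\psi(r)<C$ on $(r_0,1)$: since the midline $\eta_3:=\tfrac12(\eta_1+\eta_2)$ tends to $\infty$ as $r\uparrow1$, one eventually has $\psi(\overline{r}_0)<\psi(r)<\eta_3(r)$, and because $x\mapsto A(x,r)$ is decreasing to the left of its vertex $\eta_3(r)$, equation (\ref{eq:isopara-soliton-sphere3}) yields a pointwise lower bound on $\frac{\psi'}{1+\psi^2}$ that is integrated in closed form ($\arcsin$ and logarithms) to give $\arctan\psi(r)>h_2(r)$ with $h_2\uparrow\infty$; then $\arctan\psi$ would have to exceed $\tfrac{\pi}{2}$ already at some $\overline{r}_1<1$, a contradiction --- the same explicit comparison machinery as in Lemma \ref{thm:psi-shape1} with the inequalities reversed. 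You instead (i) prove the strip $\{\eta_1<\psi<\eta_2\}$ is forward-invariant on $(b,1)$, using the Vieta identity $\eta_1\eta_2\equiv1$ together with $S'(r)=(n-1)(1-Rr)(1-r^2)^{-3/2}>0$ for $S:=\eta_1+\eta_2$ to get $\eta_2'>0>\eta_1'$, so that a first touching point $r_*$ (where $\psi'(r_*)=0$ by Lemma \ref{thm:sign}(i)(b)) is incompatible with the one-sided sign of $\psi-\eta_i$; and (ii) observe that if $\psi\uparrow L<\infty$ then $A(\psi(r),r)\to-(n-1)(1-R)L<0$, whence $\psi'(r)\ge c\,(1-r^2)^{-1}$ near $1$ and the divergence of $\int^1(1-r^2)^{-1}\,dr$ forces blow-up. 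Your version buys rigor at two points the paper leaves implicit: the existence of $\overline{r}_0$ with $\psi(\overline{r}_0)<\psi(r)<\eta_3(r)$ tacitly assumes the trajectory cannot exit the strip (your barrier step is exactly what justifies this), and the paper's contradiction literally shows only that $\psi$ is unbounded above, while your confinement plus Lemma \ref{thm:sign}(i)(a) gives monotonicity and hence the stated limit directly; your blow-up step also avoids explicit antiderivatives. What the paper's computation buys in exchange is uniformity of method with Lemma \ref{thm:psi-shape1} and a quantitative interior radius $\overline{r}_1$ at which the comparison function reaches $\tfrac{\pi}{2}$.
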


\begin{proof}
Assume that there exists a constant $C>0$ such that $\psi(r)<C$ for all $r\in(r_0,1)$.
Define $\eta_3(r)$ by
\begin{equation*}
\eta_3(r):=\frac{1}{2}(\eta_1(r)+\eta_2(r))=\frac{(n-1)(r-R)}{\sqrt{1-r^2}}.
\end{equation*}
Then, there exists $\overline{r}_0\in(r_0,1)$ such that $\psi(\overline{r}_0)<\psi(r)<\eta_3(r)$ for all $r\in(\overline{r}_0,1)$.
Therefore, we have
\begin{align*}
\psi'(r)&=-\frac{1}{k(1-r^2)}\left(\psi(r)^2+1\right)\left(\sqrt{1-r^2}\psi(r)^2-(n-1)(r-R)\psi(r)+\sqrt{1-r^2}\right)\\
&>-\frac{1}{k(1-r^2)}\left(\psi(r)^2+1\right)\left((1+\psi(\overline{r}_0)^2)\sqrt{1-r^2}-\psi(\overline{r}_0)(n-1)(r-R)\right).
\end{align*}
Then, we find
\begin{equation*}
\frac{\psi'(r)}{1+\psi(r)^2}>-\frac{1+\psi(\overline{r}_0)^2}{k\sqrt{1-r^2}}+\frac{\psi(\overline{r}_0)(n-1)r}{k(1-r^2)}-\frac{\psi(\overline{r}_0)(n-1)R}{k(1-r^2)}.
\end{equation*}
Integrating from $\overline{r}_0$ to $r$, we have
\begin{align*}
\arctan{\psi(r)}>&-\frac{1+\psi(\overline{r}_0)^2}{k}\arcsin{r}-\frac{\psi(\overline{r}_0)(n-1)}{2k}\log{(1-r^2)}-\frac{\psi(\overline{r}_0)(n-1)R}{2k}\log{\frac{1+r}{1-r}}\\
&+\frac{1+\psi(\overline{r}_0)^2}{k}\arcsin{\overline{r}_0}+\frac{\psi(\overline{r}_0)(n-1)}{2k}\log{(1-\overline{r}_0^2)}+\frac{\psi(\overline{r}_0)(n-1)R}{2k}\log{\frac{1+\overline{r}_0}{1-\overline{r}_0}}\\
&+\arctan{\psi(\overline{r}_0)}=:h_2(r).
\end{align*}
Then, $h_2$ is increasing on $(\overline{r}_0,1)$ and $h_2(\overline{r}_0)=\arctan{\psi(\overline{r}_0)}$, $\lim_{r\uparrow 1}h_2(r)=\infty$.
Therefore, there exists $\overline{r}_1\in(r_0,1)$ with $h_2(\overline{r}_1)=\frac{\pi}{2}$ and
\begin{equation*}
\psi(r)>\tan{h_2(r)}\rightarrow\infty\quad(~r~\uparrow~\overline{r}_1~)
\end{equation*}
This contradicts the assumption that $\psi(r)<C$ for all $r\in(r_0,1)$.
\end{proof}

\begin{figure}[H]
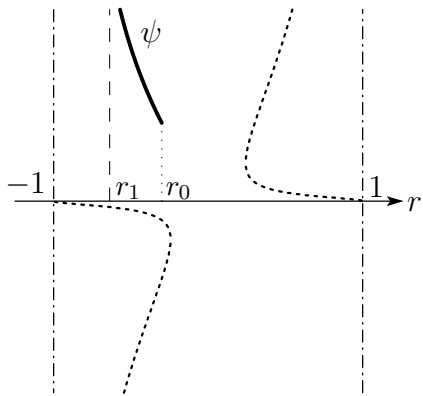

\centering
\scalebox{1.0}{\unitlength 0.1in%
% [inline block 3: 3 envs, 39706 chars -> data_tex | \begin{picture}(20.9000,20.0000)(1.4000,-22.0000)% % FUNC 1 2 3 0 Black White  ...]
}
\caption{The behavior of the graph of $\psi$ in Lemma \ref{thm:psi-shape4}}
\label{eta}
\end{figure}

By Lemmata \ref{thm:sign}-\ref{thm:psi-shape4}, we obtain the following proposition for the behavior of the graph of the solution $\psi$ of (\ref{eq:isopara-soliton-sphere3}).

\begin{proposition}\label{thm:graph-psi}
For the solution $\psi$ of the equation {\rm (\ref{eq:isopara-soliton-sphere3})}, the behavior of the graph of $\psi$ is like one of those defined by {\rm Figures \ref{psiex1}}-{\rm\ref{psiex5}}.
\end{proposition}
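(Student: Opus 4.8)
The plan is to read the global shape of any maximal solution $\psi:(x,y)\to\mathbb{R}$ of (\ref{eq:isopara-soliton-sphere3}) off the sign field of $\psi'$ given in Lemma \ref{thm:sign}, combined with the four blow-up Lemmata \ref{thm:psi-shape1}--\ref{thm:psi-shape4}. First I would record the qualitative features of the barrier curves $\eta_1,\eta_2$. Since they are the roots of $A(x,r)=\sqrt{1-r^2}\,x^2-(n-1)(r-R)x+\sqrt{1-r^2}$, we have $\eta_1\eta_2=1$ and $\eta_1+\eta_2=(n-1)(r-R)/\sqrt{1-r^2}$; hence both $\eta_i$ are negative on $(-1,a]$ and positive on $[b,1)$. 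Consequently the ``lens'' $\{\eta_1<\psi<\eta_2\}$ on which $\psi'>0$ lies strictly below the axis $\psi=0$ over $(-1,a]$ and strictly above it over $[b,1)$, while $\psi'<0$ on all of $(a,b)$. Using the identity $D(r)=(n-1)^2(r-R)^2-4(1-r^2)$ under the radicals and rationalizing, I would derive the endpoint asymptotics $\eta_1\to-\infty,\ \eta_2\to0^-$ as $r\downarrow-1$ and $\eta_1\to0^+,\ \eta_2\to+\infty$ as $r\uparrow1$. This fixes the phase portrait that organizes the whole argument.

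Next I would determine the behavior of $\psi$ as $r\uparrow y$ via an exhaustive trichotomy. If the hypothesis of Lemma \ref{thm:psi-shape1} holds (that is, $\psi<\eta_1$ somewhere on $(-1,a]$ or $\psi<0$ somewhere on $(a,1)$), then $y<1$ is finite and $\psi\to-\infty$. Otherwise $\psi\ge\eta_1$ on $(-1,a]$ and $\psi\ge0$ on $(a,1)$, and there are two subcases: if $\psi$ ever enters the right lens, Lemma \ref{thm:psi-shape2} forces $y=1$ and $\psi\to+\infty$; if it never enters the lens, then since $\psi$ cannot cross the lens it stays in $0\le\psi\le\eta_1$ on $(b,1)$ (it cannot remain above $\eta_2$, as a decreasing function cannot dominate $\eta_2\to+\infty$), so the squeeze $0\le\psi\le\eta_1\to0^+$ gives $y=1$ and $\psi\to0$. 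A mirror-image trichotomy, using Lemmata \ref{thm:psi-shape3} and \ref{thm:psi-shape4} and the squeeze against $\eta_2\to0^-$, governs $r\downarrow x$: either $\psi\to+\infty$ at finite $x>-1$, or $\psi\to-\infty$ at $x=-1$, or $\psi\to0$ at $x=-1$.

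I would then combine the two trichotomies and discard the incompatible pairings through the sign field. The key compatibility constraint is that a trajectory meeting the (negative) left lens can never reach the (positive) right lens: once it is negative for some $r\in(a,1)$ it must fall by Lemma \ref{thm:psi-shape1}. This leaves exactly five admissible combinations, which I would identify with the figures: the monotone passage $+\infty\to-\infty$ crossing zero in $(a,b)$ (Type I); the valley $+\infty\to+\infty$ whose minimum is a tangency $\psi_{\min}=\eta_i(r_0)$ at some $r_0\in(b,1)$ (Type II); the hill $-\infty\to-\infty$ with tangency $\psi_{\max}=\eta_i(r_0)$ at some $r_0\in(-1,a)$ (Type III); and the two separatrices $+\infty\to0$ and $0\to-\infty$ (Types IV and V). For Types II and III the turning point is precisely the unique $r_0$ at which the trajectory touches $\eta_1$ or $\eta_2$, so that $\psi'(r_0)=0$, which also records the value of $\psi_{\min}$ (resp.\ $\psi_{\max}$) stated in the Remark.

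The routine portion is the sign bookkeeping, which the four lemmata handle directly. The genuine obstacle is the borderline subcase of each trichotomy, where $\psi$ neither blows up nor enters the lens: here the four lemmata give no information, and one must prove convergence to $0$ exactly at the singular endpoint $r=\pm1$, where $\sqrt{1-r^2}\to0$ degenerates (\ref{eq:isopara-soliton-sphere3}). I expect the careful squeeze of $\psi$ between $0$ and $\eta_1$ (resp.\ $\eta_2$) near that endpoint, together with the verification that the trichotomies are genuinely exhaustive and that their admissible combinations collapse to precisely five shapes, to be the crux of the proof.
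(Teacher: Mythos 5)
Your proposal is correct and follows essentially the same route as the paper: the paper records no argument for this proposition beyond the phrase ``By Lemmata \ref{thm:sign}--\ref{thm:psi-shape4}, we obtain the following proposition,'' and your assembly --- the sign field of Lemma \ref{thm:sign} with the lens $\{\eta_1<\psi<\eta_2\}$ negative over $(-1,a]$ and positive over $[b,1)$, the four blow-up lemmata for the eight non-borderline endpoint behaviors, and the squeeze of $\psi$ between $0$ and $\eta_1$ (resp.\ $\eta_2$) with $\eta_1\to0^+$ as $r\uparrow1$ (resp.\ $\eta_2\to0^-$ as $r\downarrow-1$) in the borderline cases --- is exactly the intended argument, worked out in more detail than the paper itself (existence of Types IV and V, which you rightly leave aside, is the job of the paper's subsequent connectedness lemma). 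One negligible slip: a Type I trajectory need not cross zero in $(a,b)$ --- the crossing may also occur in $(-1,a]$ above the left lens or in $(b,1)$ below $\eta_1$ --- but this has no effect on the five-fold classification.
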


\begin{figure}[H]
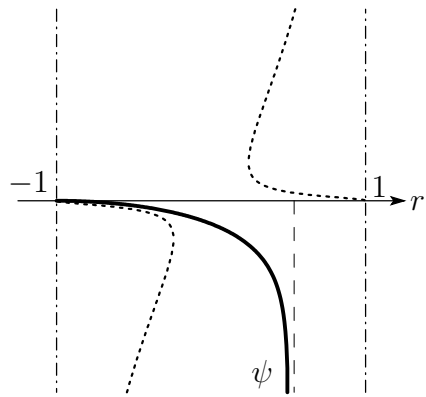

\centering
\scalebox{1.0}{\unitlength 0.1in%
% [inline block 4: 5 envs, 60934 chars -> data_tex | \begin{picture}(20.9000,20.0000)(1.4000,-22.0000)% % FUNC 1 2 3 0 Black White  ...]
}
\caption{The graph of $\psi$ (Type V)}
\label{psiex5}
\end{minipage}
\end{figure}

For the graph of $\psi$ in Proposition \ref{thm:graph-psi}, we have not yet show whether $\psi$ in the case of {\rm Figures \ref{psiex4}} and {\rm \ref{psiex5}} exists or not.
From the following lemma, we obtain the existence.

\begin{lemma}
The solution $\psi$ of the equation {\rm (\ref{eq:isopara-soliton-sphere3})} in {\rm Figures \ref{psiex4}} and {\rm Figure \ref{psiex5}} exists.
\end{lemma}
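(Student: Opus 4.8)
The plan is to produce, for each of the two cases, a \emph{single} solution of (\ref{eq:isopara-soliton-sphere3}) whose limit at the appropriate endpoint of $(-1,1)$ is exactly $0$, and then to let Proposition \ref{thm:graph-psi} do the rest. Indeed, among the five admissible shapes, Type IV is the only one with $\psi\to0$ as $r\uparrow1$ and Type V is the only one with $\psi\to0$ as $r\downarrow-1$; so once a solution with the correct endpoint limit is in hand, its entire shape (in particular the interior blow-up at the other end) is furnished for free by the classification. Thus the whole problem reduces to a shooting argument producing such a separatrix solution.

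I will treat Type IV on the interval $(b,1)$. First I record the nullcline geometry there: since the product of the two roots of $A(\,\cdot\,,r)$ equals $\sqrt{1-r^2}/\sqrt{1-r^2}=1$, one has $\eta_1(r)\eta_2(r)=1$, and at $r=b$ the discriminant $B$ vanishes, giving $\eta_1(b)=\eta_2(b)=\tfrac{(n-1)(b-R)}{2\sqrt{1-b^2}}>0$; hence $\eta_1,\eta_2>0$ on $(b,1)$ with $\eta_1\to0^{+}$ and $\eta_2\to+\infty$ as $r\uparrow1$. Fix $\bar r\in(b,1)$ and, for $c\in(0,\eta_1(\bar r))$, let $\psi_c$ be the maximal rightward solution with $\psi_c(\bar r)=c$. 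Set
$$\mathcal N:=\{\,c\in(0,\eta_1(\bar r)) \ :\ \psi_c\ \text{attains the value}\ 0\ \text{at some}\ r\in(\bar r,1)\,\}.$$
On the line $\psi=0$ one has $\psi'=-\tfrac{1}{k\sqrt{1-r^2}}<0$, so a solution meeting $0$ crosses transversally into $\psi<0$ and then diverges to $-\infty$ by Lemma \ref{thm:psi-shape1}; transversality gives that $\mathcal N$ is open. Since the smooth right-hand side of (\ref{eq:isopara-soliton-sphere3}) forces uniqueness and hence no crossing of solutions, $c'<c$ implies $\psi_{c'}<\psi_{c}$, so $\mathcal N$ is a down-set: $\mathcal N=(0,c^{*})$ with $c^{*}=\sup\mathcal N$ and $c^{*}\notin\mathcal N$. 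A drift estimate near $\psi=0$, where $\psi'\le-\tfrac{1}{k\sqrt{1-r^2}}+\tfrac{(n-1)(r-R)}{k(1-r^2)}\psi<-\delta$ on a compact subinterval while $\psi$ stays small, shows every sufficiently small $c$ forces $\psi_c$ down to $0$, so $\mathcal N\ne\emptyset$ and $c^{*}>0$.

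I then claim $\psi_{c^{*}}$ is the desired Type IV solution. Symmetrically, the set $\mathcal P$ of those $c$ for which $\psi_c$ crosses $\eta_1$ upward is an open up-set, and once $\psi_c$ enters the strip $\eta_1<\psi<\eta_2$ it remains there and tends to $+\infty$ by Lemma \ref{thm:psi-shape2}; thus $\mathcal P$ and $\mathcal N$ are disjoint. Because $\mathcal P$ is open and points of $\mathcal N$ accumulate at $c^{*}$ from below, $c^{*}\notin\mathcal P$. Consequently $\psi_{c^{*}}$ neither meets $0$ nor crosses $\eta_1$, so $0<\psi_{c^{*}}(r)\le\eta_1(r)$ for every $r\in(\bar r,1)$; by Lemma \ref{thm:sign}(i)(c) it is strictly decreasing there, and being squeezed between $0$ and $\eta_1\to0^{+}$ it satisfies $\lim_{r\uparrow1}\psi_{c^{*}}(r)=0$. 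Proposition \ref{thm:graph-psi} then identifies its graph with Figure \ref{psiex4}.

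The Type V case is carried out analogously on $(-1,a)$, where $\eta_1,\eta_2<0$ with $\eta_2\to0^{-}$ and $\eta_1\to-\infty$ as $r\downarrow-1$: one fixes $\bar r\in(-1,a)$, shoots leftward through negative initial data $c\in(\eta_2(\bar r),0)$, and takes the threshold between solutions that reach $0$ (and then blow up to $+\infty$ by Lemma \ref{thm:psi-shape3}) and solutions that cross $\eta_2$ into the strip (and then tend to $-\infty$ by Lemma \ref{thm:psi-shape4}); the threshold solution is squeezed between $\eta_2$ and $0$, whence $\lim_{r\downarrow-1}\psi=0$, and Proposition \ref{thm:graph-psi} identifies it with Figure \ref{psiex5}. (When $R=0$, i.e. $k=1,3,6$, one may alternatively observe that $(r,\psi)\mapsto(-r,-\psi)$ is a symmetry of (\ref{eq:isopara-soliton-sphere3}) carrying the Type IV solution to a Type V solution.) The main obstacle is the shooting step itself: one must verify that the two exceptional sets are open and disjoint, so that their common boundary value yields a solution escaping \emph{both} fates, and that this survivor is genuinely pinned to the limit $0$ by the squeeze. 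Both points rest on uniqueness/monotonicity of the flow together with the sign analysis of Lemma \ref{thm:sign} and the blow-up Lemmata \ref{thm:psi-shape1}--\ref{thm:psi-shape4}.
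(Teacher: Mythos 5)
Your proposal is correct in substance, but it takes a genuinely different route from the paper's own proof. The paper argues topologically: it partitions the set $S$ of all solutions of (\ref{eq:isopara-soliton-sphere3}) into $S_1$ (solutions vanishing at some interior point), $S_2$ (solutions meeting $\eta_1$ or $\eta_2$), and $S_3$ (solutions with limit $0$ at $\pm1$), notes that the graphs of all solutions cover the strip $(-1,1)\times\mathbb{R}$, that the unions of graphs over $S_1$ and over $S_2$ are open and mutually disjoint (by uniqueness of solutions), and concludes from connectedness of the strip that $S_3\neq\emptyset$. Your argument is instead a constructive shooting/separatrix argument in the half-strip $\{(r,s):r\in(b,1),\,0<s<\eta_1(r)\}$ and its mirror: the threshold $c^{*}$ between the open, disjoint, monotone parameter sets $\mathcal N$ (solutions dragged through $0$ and then to $-\infty$ by Lemma \ref{thm:psi-shape1}) and $\mathcal P$ (solutions escaping into the strip and then to $+\infty$ by Lemma \ref{thm:psi-shape2}) yields a survivor pinned between $0$ and $\eta_1$, and your observation that $\eta_1\eta_2\equiv1$ (so $\eta_1\to0^{+}$ as $r\uparrow1$) forces the limit $0$; Proposition \ref{thm:graph-psi} then supplies the rest of the shape. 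What each approach buys: the paper's proof is much shorter but non-constructive, and as literally stated it only shows that the union $S_3$ is nonempty, i.e.\ that a solution of Figure \ref{psiex4} \emph{or} Figure \ref{psiex5} type exists (one must localize the connectedness argument, or invoke a symmetry as you do for $R=0$, to get both); your construction produces the Type IV and Type V solutions separately and does not need the exhaustiveness of the classes $S_1\cup S_2\cup S_3$, only uniqueness, continuous dependence, Lemma \ref{thm:sign} and the blow-up Lemmata \ref{thm:psi-shape1}--\ref{thm:psi-shape4}. Two small points you should still nail down: (i) rule out the boundary case $c^{*}=\eta_1(\bar r)$ — if $c^{*}$ were the right endpoint of the parameter interval, the solution through $(\bar r,\eta_1(\bar r))$ enters the strip, so by continuous dependence parameters just below $c^{*}$ would lie in $\mathcal P$, contradicting disjointness with $\mathcal N$; and (ii) state explicitly that $\psi_{c^{*}}$ extends to all of $(\bar r,1)$, which follows because it is trapped between $0$ and $\eta_1$ and hence cannot blow up before $r=1$. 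Both are routine repairs, and the rest of your argument is sound.
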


\begin{proof}
For the set $S$ of all solutions of the equation {\rm (\ref{eq:isopara-soliton-sphere3})}, we define sets $S_1$, $S_2$, $S_3\subset S$  by
\begin{align*}
&S_1:=\{\psi\in S\vert\exists r_0\in(-1,1):\psi(r_0)=0\}\\
&S_2:=\{\psi\in S\vert\exists r_0\in(-1,1):\psi(r_0)=\eta_1(r_0)~or~\psi(r_0)=\eta_2(r_0)\}\\
&S_3:=\{\psi\in S\vert\psi(1)=0~or~\psi(-1)=0\}.
\end{align*}
Then, we have
\begin{equation*}
(-1,1)\times\mathbb{R}=\cup_{\psi\in S_1\cup S_2\cup S_3}{\rm Im}(\psi).
\end{equation*}
Since $\cup_{\psi\in S_1}{\rm Im}(\psi)$ and  $\cup_{\psi\in S_2}{\rm Im}(\psi)$ are open sets and $(-1,1)\times\mathbb{R}$ is connected, we find $S_3$ is not empty set and we obtain the statement of this lemma.
\end{proof}

Define $\zeta_1$ and $\zeta_2$ by $\zeta_i(r)=\frac{1}{k\sqrt{1-r^2}}\eta_i(r)$,~($i=1,2$).
By Proposition \ref{thm:graph-psi}, we obtain the following proposition.
\begin{proposition}\label{thm:graph-derivative}
For the solution $V$ of the equation {\rm (\ref{eq:isopara-soliton-sphere2})}, the behavior of the graph of $V'$ is like one of those defined by {\rm Figures \ref{V'ex1}}-{\rm \ref{V'ex5}}.
Here, the dotted curves in {\rm Figures \ref{V'ex1}}-{\rm \ref{V'ex5}} are the graphs of $\zeta_1$ and $\zeta_2$.

\begin{figure}[H]
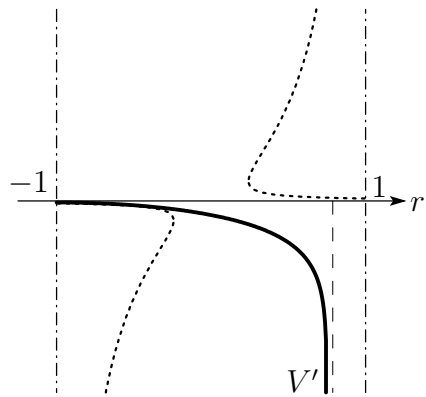

\centering
\scalebox{1.0}{\unitlength 0.1in%
% [inline block 5: 9 envs, 143919 chars -> data_tex | \begin{picture}(20.9000,20.0000)(1.4000,-22.0000)% % FUNC 1 2 3 0 Black White  ...]
}
\caption{The graph of $V'$ (Type V)}
\label{V'ex5}
\end{minipage}
\end{figure}
\end{proposition}

\begin{proof}
For the solution $V$ of the equation {\rm (\ref{eq:isopara-soliton-sphere2})}, we have $V'(r)=\frac{1}{k\sqrt{1-r^2}}\psi(r)$ and $\psi$ is the solution of the equation {\rm (\ref{eq:isopara-soliton-sphere3})}.
Therefore, when the graph of $\psi$ is like one of those defined by Figure \ref{psiex1}, \ref{psiex4} and \ref{psiex5}, it is clear that the graph of $V'$ is like one defined by Figure \ref{V'ex1}, \ref{V'ex4} and \ref{V'ex5} respectively.
In the case where the graph of $\psi$ is like one of those defined by Figure \ref{psiex2} and \ref{psiex3}, there exists $r_0\in(-1,a]\cup[b,1)$ with $\psi(r_0)=\eta_1(r_0)$ or $\psi(r_0)=\eta_2(r_0)$ and we find $\psi'(r_0)=0$ by Lemma \ref{thm:sign}.
Then, we obtain $V''(r_0)=\frac{r_0}{k(1-r_0^2)^{\frac{3}{2}}}\psi(r_0)+\frac{1}{k\sqrt{1-r_0^2}}\psi'(r_0)=\frac{r_0}{k(1-r_0^2)^{\frac{3}{2}}}\psi(r_0)$.
Therefore, when the graph of $\psi$ is like one defined by Figure \ref{psiex2}, if $r_0>0$, then $V''(r_0)>0$ and the graph of $V'$ is like one defined by Figure \ref{V'ex2}, if $r_0=0$, then $V''(r_0)=0$ and the graph of $V'$ is like one defined by Figure \ref{V'ex2-2}, and if $r_0<0$, then $V''(r_0)<0$ and the graph of $V'$ is like one defined by Figure \ref{V'ex2-3}.
Also, when the graph of $\psi$ is like one defined by Figure \ref{psiex3}, if $r_0<0$, then $V''(r_0)>0$ and the graph of $V'$ is like one defined by Figure \ref{V'ex3}, if $r_0=0$, then $V''(r_0)=0$ and the graph of $V'$ is like one defined by Figure \ref{V'ex3-2}, and if $r_0>0$, then $V''(r_0)<0$ and the graph of $V'$ is like one defined by Figure \ref{V'ex3-3}.
In the case $k=1$, $3$, $6$, we find $a<R=0<b$.
Therefore, when the graph of $\psi$ is like one of those defined by Figure \ref{psiex2} and Figure \ref{psiex3}, the graph of $V'$ is like one defined by Figure \ref{V'ex2} and Figure \ref{V'ex3} respectively if $k=1$, $3$, $6$.
\end{proof}

By Proposition \ref{thm:graph-derivative}, we obtain Theorem \ref{thm:shape-graph}.
For the solution $V$ of the equation {\rm (\ref{eq:isopara-soliton-sphere2})}, when the graph of $V'$ is like one of those defined by Figure \ref{V'ex1}, \ref{V'ex4} and \ref{V'ex5}, it is clear that the graph of $V$ is like one defined by Figure \ref{Vex1}, \ref{Vex4} and \ref{Vex5} respectively.
When the graph of $V'$ is like one of those defined by Figure \ref{V'ex2}, \ref{V'ex2-2} and \ref{V'ex2-3}, the graph of $V$ is like one defined by Figure \ref{Vex2}.
Also, when the graph of $V'$ is like one of those defined by Figure \ref{V'ex3}, \ref{V'ex3-2} and \ref{V'ex3-3}, the graph of $V$ is like one defined by Figure \ref{Vex3}.

\section*{Acknowledgement}

\indent I would like to thank my supervisor Naoyuki Koike for helpful support and valuable comment.

\end{document}